\definecolor{mygray}{gray}{.5}
\theoremstyle{definition}
\newtheorem{thmA}{Theorem}
\newtheorem{theorem}{Theorem}[section]
\newtheorem{lemma}[theorem]{Lemma}
\newtheorem{rem}[theorem]{Remark}
\newtheorem{prop}[theorem]{Proposition}
\newtheorem{cor}[theorem]{Corollary}
\theoremstyle{definition}
\newtheorem{definition}[theorem]{Definition}
\newtheorem{ex}[theorem]{Example}
\newcommand{\R}{\mathbb{R}}
\newcommand{\func}[5]{\ensuremath{#1 : \begin{dcases}\, #2  &\rightarrow  #3 \\ \, #4 &\mapsto  #5 \end{dcases}}}
\author{Timo Siebenand}
\address{Timo Siebenand
\newline Westf\"alische Wilhelms-Universit\"at M\"unster, Mathematisches Institut
\newline Einsteinstra\ss{}e 62, 48149 M\"unster, Germany}
\email{timo.siebenand@uni-muenster.de}
\title[Ideal structure of the Fourier-Stieltjes algebra]{On the ideal structure of the Fourier-Stieltjes algebra of certain groups}
\begin{document}

\begin{abstract}
	We determine the structure of the weak*-closed $G$-invariant ideals in the Fourier-Stieltjes 
	algebra
	$B(G)$ of certain groups $G$ by means of a $K$-theoretical obstruction. 
	The groups to which this applies are groups whose only irreducible unitary 
	representations that are not weakly contained in the left regular representation are class-one 
	representations. In particular, this is the case for the groups $\mathrm{SL}(2,\mathbb{R})$ and $\mathrm{SL}(2,\mathbb{C})$, which we consider as explicit examples.
\end{abstract}

\maketitle

\section{Introduction}
Let $G$ be a locally compact group and $p\in [1,\infty]$.
A unitary group representation $\pi\colon G \to \mathcal{U}(V)$ on a Hilbert space
$V$ is called $L^p$-representation if there exists
a dense subspace $V_0\subseteq V$ such that
for all $v,w \in V_0$, the matrix coefficient function $\pi_{v,w}\colon G \to \mathbb{C},\, s \mapsto
\langle \pi(s) v,w \rangle$ lies in $L^p(G)$. The representation $\pi$ is called $L^{p+}$-representation if
$\pi$ is an $L^{p'}$-re\-pre\-sentation for all $p'>p$.

A group C*-algebra $C^*_\mu(G)$ is a completion of $C_c(G)$ with respect 
to a C*-norm $\|\cdot \|_\mu$ on $C_c(G)$ such that
	\begin{align*}
		\|f\|_r \leq \|f\|_\mu \leq \|f\|_u
	\end{align*}
for all $f\in C_c(G)$, where $\|\cdot\|_r$ and $\|\cdot\|_u$ denote the
reduced and the universal C*-norm on $C_c(G)$, respectively. 

The group C*-algebra $C^*_\mu(G)$ is called exotic
if $\| \cdot \|_\mu$ is not equal to the reduced and not equal to the universal C*-norm.
It follows immediately that the identity map on $C_c(G)$ extends to surjective *-homomorphisms
$q:C^*(G)\to C_\mu^*(G)$ and $s:C_\mu^*(G) \to C^*_r(G)$, where $C^*(G)$ and $C^*_r(G)$
denote the universal and reduced group C*-algebra of $G$, respectively. 

In this article, we are mainly interested in the
 potentially exotic group C*-algebras
\begin{align*}
	C^*_{L^p}(G) \mbox{ and }C_{L^{p+}}^*(G),
\end{align*}
defined as the completion of the *-algebra $C_c(G)$ with respect to the C*-norms
\begin{align*}
	\|f \|_{C_{L^{p}}} &= \sup \lbrace \|\pi(f)\| \mid \pi \mbox{ is an $L^{p}$-representation}\, \rbrace \mbox{ and }\\
	\|f \|_{C_{L^{p+}}} &= \sup \lbrace \|\pi(f)\| \mid \pi \mbox{ is an $L^{p+}$-representation}\, \rbrace,
\end{align*}
respectively. The construction of these algebras essentially goes back to \cite{brownGuentner} (see also \cite{SW2}).

Another natural way to describe group C*-algebras is by means of certain subspaces of the (Banach) dual space of the universal group C*-algebra. This was  studied systematically by Kaliszewski, Landstad and Quigg \cite{KLQ-Exotic}.
The Fourier-Stieltjes algebra $B(G)$ of $G$, consisting of all matrix coefficients of the unitary group representations
of $G$, is a subalgebra of the algebra of bounded continuous complex-valued functions on $G$.
It can be canonically identified with the (Banach) dual space
$C^*(G)'$ of the universal group C*-algebra $C^*(G)$ via the pairing induced by
\begin{align*}
	\langle\varphi,f\rangle = \int_G  \varphi f \mathrm{d}\mu_G
\end{align*}
for $\varphi \in B(G)$ and $f\in C_c(G) \subseteq C^*(G)$, where $\mu_G$ denotes a Haar measure
on $G$.

Besides the algebra structure, $B(G)$ admits a canonical left and right $G$-action (see e.g.  \cite[Section 3]{KLQ-Exotic}).
A subspace $E\subseteq B(G)$ is called $G$-module if $E$ is closed under this
left and right action. A $G$-module $E\subseteq B(G)$
is called $G$-ideal if $E$ is an ideal in $B(G)$. It is proved in \cite{KLQ-Exotic} that the
weak*-closure of a $G$-module (resp. $G$-ideal) is again a $G$-module (resp. $G$-ideal).
An example of a weak*-closed $G$-ideal is $B_r(G)= C_r^*(G)'$. A weak*-closed
$G$-module $E\subseteq B(G)$ is said to be admissible if $B_r(G) \subseteq E$.
Every non-trivial weak*-closed $G$-ideal is admissible (see \cite{wiersmaRuan}).
More generally, the Banach dual space $C^*_\mu(G)'\subseteq B(G)$ of a group C*-algebra $C_\mu^*(G)$ of $G$ is
an example of an admissible weak*-closed $G$-module, and it is proven in \cite{KLQ-Exotic} that this correspondence from group C*-algebras of $G$ to admissible weak*-closed
$G$-submodules of $B(G)$ is one-to-one.

Group C*-algebras corresponding to $G$-ideals are of particular importance. 
This can be explained by the fact that these group C*-algebras are stable under certain invariants.
An illustration of this fact, and one of the major tools in this article, is given by the following result.
\begin{theorem}[\cite{bew1}]\label{thm:iso_Ktheory}
	Let $G$ be a K-amenable second countable locally compact group and $C^*_\mu(G)$ a group C*-algebra
	such that $C^*_\mu(G)'$ is an ideal in $B(G)$. Then the canonical
	quotient maps $q:C^*(G)\to C^*_\mu(G)$ and $s:C^*_\mu(G) \to C^*_r(G)$
	induce KK-equivalences.
\end{theorem}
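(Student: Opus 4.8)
The plan is to reduce the statement to a single Kasparov equivalence and then construct its inverse by descent. Since $q$ and $s$ both extend the identity on $C_c(G)$, the composite $s\circ q$ is precisely the canonical map $\lambda\colon C^*(G)\to C^*_r(G)$; because $G$ is K-amenable, $\lambda$ is a KK-equivalence by the theorem of Cuntz and Julg--Valette. Writing $[q]\in \mathrm{KK}(C^*(G),C^*_\mu(G))$ and $[s]\in\mathrm{KK}(C^*_\mu(G),C^*_r(G))$, functoriality of the Kasparov product gives $[\lambda]=[q]\otimes_{C^*_\mu(G)}[s]$. Hence it suffices to prove that \emph{one} of $q,s$ is a KK-equivalence: if, say, $[s]$ is invertible, then $[q]=[\lambda]\otimes_{C^*_r(G)}[s]^{-1}$ is invertible as well, and symmetrically. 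I would aim at $[s]$.

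The decisive structural input is the hypothesis that $E:=C^*_\mu(G)'$ is an ideal in $B(G)$. Dualizing the pointwise product on $B(G)=C^*(G)'$, which is adjoint to the comultiplication $\Delta\colon C^*(G)\to M(C^*(G)\otimes C^*(G))$ integrating $s\mapsto s\otimes s$, the ideal property translates into the statement that the pre-annihilator $\ker q={}^{\perp}E$ is a (two-sided) coideal, i.e. $\Delta(\ker q)$ is carried into the kernel of $q\otimes q$. Consequently $\Delta$ descends to a compatible comultiplication $\Delta_\mu$ on $C^*_\mu(G)$, and $q$ and $s$ become morphisms of C*-bialgebras. This is exactly the extra rigidity needed to run the classical K-amenability machinery \emph{at the exotic level}, rather than only for the pair $C^*(G),C^*_r(G)$.

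With this in hand I would construct a Kasparov inverse to $[s]$ by mimicking the Cuntz--Julg--Valette descent argument. K-amenability supplies a cycle representing $\mathbf 1_G\in\mathrm{KK}^G(\mathbb C,\mathbb C)$ whose underlying $G$-representation is weakly contained in the regular representation $\lambda_G$; such a representation factors through $C^*_r(G)$ and \emph{a fortiori} through $C^*_\mu(G)$, since $B_r(G)\subseteq E$. Performing the descent of this cycle relative to the comultiplication $\Delta_\mu$ should produce a class $\gamma\in\mathrm{KK}(C^*_r(G),C^*_\mu(G))$, and the computation of the two relevant Kasparov products should yield $[s]\otimes_{C^*_r(G)}\gamma=1_{C^*_\mu(G)}$ and $\gamma\otimes_{C^*_\mu(G)}[s]=1_{C^*_r(G)}$, exactly as in the full/reduced case.

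The main obstacle is to verify that these constructions genuinely survive passage to the exotic completion. One must check that the module actions defining $\gamma$ are bounded for the $\mu$-norm and that the operator homotopies realizing the two product identities take place within the KK-theory of $C^*_\mu(G)$, rather than only of $C^*(G)$ or $C^*_r(G)$. This is precisely where the two hypotheses cooperate: the coideal property of $\ker q$ guarantees that the comultiplication, and hence the descent, is compatible with the exotic norm, while the weak containment of the K-amenability cycle in $\lambda_G$ provides the reduced-side norm estimates that force the relevant cycles to become degenerate along the homotopy. Second countability keeps everything within separable, $\sigma$-unital KK-theory, so that Kasparov's technical theorem and the associativity of the product apply without change.
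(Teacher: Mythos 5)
Your opening reduction is correct and clean: since $s\circ q=\lambda$ and K-amenability makes $[\lambda]$ a KK-equivalence, invertibility of $[s]$ (or of $[q]$) implies invertibility of the other via $[q]=[\lambda]\otimes_{C^*_r(G)}[s]^{-1}$. The problems begin where the ideal hypothesis is supposed to enter.

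First, your dualization of the ideal property is wrong. The condition $\Delta(\ker q)\subseteq \ker(q\otimes q)$ says that $\langle \varphi\psi, x\rangle = \langle \varphi\otimes\psi, \Delta(x)\rangle = 0$ for all $x\in\ker q$ and all $\varphi,\psi\in E$, i.e.\ that $E\cdot E\subseteq E$: this characterizes $E$ being a \emph{subalgebra} of $B(G)$, not an ideal. The ideal property $B(G)\cdot E\subseteq E$ dualizes instead to $\Delta(\ker q)\subseteq\ker(q\otimes \mathrm{id})$, which yields a \emph{coaction} $\delta_\mu\colon C^*_\mu(G)\to \mathcal{M}(C^*_\mu(G)\otimes C^*(G))$ rather than a comultiplication on $C^*_\mu(G)$ (this is the Kaliszewski--Landstad--Quigg characterization). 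This is not a cosmetic slip: the theorem genuinely fails for duals that are merely subalgebras (the paper points to \cite[Example 4.12]{bew0} for the necessity of the ideal assumption), so any argument whose only structural input is the bialgebra structure you extract cannot prove the statement.

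Second, the heart of the proof --- that the Cuntz/Julg--Valette descent survives the exotic completion --- is exactly the part you defer with ``should produce'', ``should yield'' and ``one must check''. Establishing boundedness of the relevant module actions for the $\mu$-norm and carrying out the operator homotopies inside $\mathrm{KK}$ of $C^*_\mu(G)$ is where all the work lies, and it is not a routine verification from a comultiplication. The route the paper outlines (following Buss--Echterhoff--Willett) handles precisely this point: a $G$-ideal $E\subseteq B(G)$ gives rise to a \emph{correspondence} crossed-product functor $-\rtimes_\mu G$ with $\mathbb{C}\rtimes_\mu G=C^*_\mu(G)$, correspondence functors are exactly those for which Kasparov's descent homomorphism $\mathrm{KK}^G\to \mathrm{KK}$ generalizes (\cite[Proposition 6.1]{bew1}), and with descent in hand the Julg--Valette argument goes through (\cite[Theorem 6.6]{bew1}). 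In other words, the ideal hypothesis enters through the correspondence-functor property enabling descent, not through the (weaker, and incorrectly identified) coideal condition you propose; without that ingredient your sketch does not close.
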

Recall that a second countable locally compact group $G$ is K-amenable if the identity
in the $G$-equivariant KK-ring $KK^G(\mathbb{C},\mathbb{C})$ is represented by a $G$-equivariant Kasparov module such that the $G$-representation is weakly contained in the left regular representation of $G$ \cite{cuntz_1982}, \cite{julgValette}.

Since Theorem \ref{thm:iso_Ktheory} and its proof are the main motivation for this article, we will outline the strategy of the proof in the following:

The theorem is a special case of a more general theorem (see \cite[Theorem 6.6]{bew1}) on correspondence crossed product functors and generalizes a result of Julg and Valette  (see \cite[Proposition 3.4]{julgValette}).
The major point in the proof of Julg and Valette's result is the existence of the descent homomorphism (a functor from the category $KK^G$ to the category $KK$) constructed by Kasparov in \cite[Theorem 3.11]{kasparov_1995} for the universal crossed product functor. Buss, Echterhoff and Willett introduced in \cite{bew1} the so-called correspondence crossed product functors and generalized  Kasparov's construction (see \cite[Proposition 6.1]{bew1}) for these correspondence crossed product functors, which enabled them to prove a result analogous to \cite[Proposition 3.4]{julgValette} (see \cite[Theorem 6.6]{bew1}). 

In order to finally conclude the theorem as it is formulated above, it must be noted that for every group C*-algebra $C_\mu^*(G)$ corresponding to a $G$-ideal in the Fourier-Stieltjes algebra, there is a correspondence crossed product functor $-\rtimes_\mu G$ with $\mathbb{C}\rtimes_\mu G = C_\mu^*(G)$ \cite{bew1}. On the other hand, it is actually a necessary condition for a correspondence crossed product functor $-\rtimes_\mu G$ that $(\mathbb{C}\rtimes_\mu G)'$ is an $G$-ideal in the Fourier-Stieltjes algebra \cite[Corollary 5.7]{bew1}. 

The proof of Theorem \ref{thm:iso_Ktheory}, besides its elegance, is of a general nature and leaves open to a certain extent the necessity of the ideal assumption of $C^*_\mu(G)'$ in the Fourier-Stieltjes algebra $B(G)$. Although it is not difficult to find counterexamples that justify a certain necessity of this assumption (see e.g. \cite[Example 4.12]{bew0}), I hope to further emphasize this aspect. The main result of this article is the following theorem.
\begin{thmA}\label{thmA}
	Let $G = \mathrm{SL}(2,\mathbb{F})$ for $\mathbb{F}= \mathbb{R}, \, \mathbb{C}$,
	and let $C^*_\mu(G)$ be a group C*-algebra of $G$ 
	such that $C^*_\mu(G)' \unlhd B(G)$ is a $G$-ideal in $B(G)$. 
	Then there exists a unique element $p\in [2,\infty]$ such that $B_{L^{p+}}(G) = C^*_\mu(G)'$, 	
	where $B_{L^{p+}}(G):= C^*_{L^{p+}}(G)'$.
\end{thmA}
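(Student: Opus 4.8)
My plan is to translate the problem into the ideal theory of $C^*(G)$ and resolve it with the K-theoretic necessary condition supplied by Theorem \ref{thm:iso_Ktheory}. By the Kaliszewski--Landstad--Quigg correspondence recalled above, $C^*_\mu(G)$ is encoded by the weak*-closed $G$-submodule $C^*_\mu(G)'$, and for the canonical quotient $q\colon C^*(G)\to C^*_\mu(G)$ the admissibility $B_r(G)\subseteq C^*_\mu(G)'$ is equivalent to $I:=\ker q$ lying inside $J_r:=\ker\bigl(C^*(G)\to C^*_r(G)\bigr)$. Since $G=\mathrm{SL}(2,\mathbb{F})$ is type I and second countable, such ideals correspond bijectively to open subsets $U$ of the non-tempered dual $X:=\widehat{G}\setminus\widehat{G}_r$, the support of $C^*_\mu(G)'$ being the closed complement $X\setminus U$ together with all of $\widehat{G}_r$. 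As $\mathrm{SL}(2,\mathbb{R})$ and $\mathrm{SL}(2,\mathbb{C})$ are rank one, they are K-amenable, so Theorem \ref{thm:iso_Ktheory} applies and yields the condition I will exploit: if $C^*_\mu(G)'$ is a $G$-ideal, then $q$ is a KK-equivalence, so the six-term sequence of $0\to I\to C^*(G)\to C^*_\mu(G)\to 0$ forces $K_*(I)=0$. In particular, taking $\mu=r$ (where $B_r(G)$ is a $G$-ideal) gives $K_*(J_r)=0$.

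Next I would make $X$ and $J_r$ explicit. By the standing class-one hypothesis, $X$ consists exactly of the complementary series and the trivial representation. Using the decay of the associated spherical functions I would parametrise the complementary series by $s\in(0,1)$ so that $\pi_s$ is an $L^{p(s)+}$- but not an $L^{p(s)}$-representation, with $p\colon(0,1)\to(2,\infty)$ a continuous strictly increasing bijection; in the Fell topology this identifies $X$ with the half-open interval $(0,1]$, the trivial representation appearing as the endpoint $s=1$, a weak limit of the $\pi_s$. Thus $J_r$ is a continuous field over $(0,1]$ with fibre $\mathcal{K}$ on $(0,1)$ dropping to the one-dimensional fibre $\mathbb{C}$ at $s=1$. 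Under this dictionary, and using that $C^*_{L^{2+}}(G)=C^*_r(G)$ (Cowling--Haagerup--Howe) and $C^*_{L^{\infty+}}(G)=C^*(G)$, the algebra $C^*_{L^{p+}}(G)$ is the quotient of $C^*(G)$ by the ideal $I_p$ supported on $U_p:=(s(p),1]$, where $p(s(p))=p$. That $B_{L^{p+}}(G)$ is itself a $G$-ideal I would check separately from tensor-stability: matrix coefficients are bounded, so by Hölder the class of $L^{p+}$-representations is closed under tensoring with arbitrary representations.

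The heart of the argument is then to show that $K_*(I_U)=0$ if and only if $U=U_p$. Writing an open $U\subseteq(0,1]$ as the disjoint union of its components gives $K_*(I_U)=\bigoplus_C K_*(I_C)$. A bounded component $C=(a,b)\subseteq(0,1)$ contributes $I_C\cong C_0((a,b))\otimes\mathcal{K}$ with $K_1(I_C)=\mathbb{Z}\neq 0$, so it obstructs vanishing. A component reaching the endpoint, $C=(a,1]$, instead has $K_*(I_C)=0$: the quotient $J_r/I_{(a,1]}$ is the cone $C_0((0,a])\otimes\mathcal{K}$, which is K-contractible, so the six-term sequence gives $K_*(I_{(a,1]})\cong K_*(J_r)=0$. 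Hence $K_*(I_U)=0$ forces $U$ to have no bounded component, i.e.\ $U=(s_0,1]$ for some $s_0$ (allowing $U=\varnothing$ and $U=(0,1]$); equivalently $\ker q=I_p$ and $C^*_\mu(G)'=B_{L^{p+}}(G)$ with $p=p(s_0)\in[2,\infty]$. Together with the necessary condition from Theorem \ref{thm:iso_Ktheory} this shows every weak*-closed $G$-ideal has the asserted form, and uniqueness is immediate because $p\mapsto U_p=(s(p),1]$ is injective.

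The step I expect to be the main obstacle is the structural input underlying the dictionary of the second paragraph: proving that, for $\mathbb{F}=\mathbb{R},\mathbb{C}$, the non-tempered dual carries exactly the Fell topology of $(0,1]$ with the trivial representation attached as a boundary limit of the complementary series, and that $J_r$ has the stated continuous-field structure with the one-dimensional fibre at $s=1$. This is where the class-one hypothesis is essential, and where the spherical-function decay estimates must be converted into the precise exponent function $p(s)$ and the precise fibrewise behaviour. Once this is in place the K-theory is routine; the only delicate point is the degeneration of $J_r$ at $s=1$, but a direct computation of the index map there can be bypassed using $K_*(J_r)=0$, which K-amenability supplies for free. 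The case $\mathbb{F}=\mathbb{C}$ runs in parallel, as $\mathrm{SL}(2,\mathbb{C})$ has the same one-parameter class-one non-tempered dual and no discrete series to affect $\widehat{G}_r$.
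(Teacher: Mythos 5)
Your proposal does prove the theorem, but by a genuinely different route through the middle than the paper, and with the hardest inputs deferred to the literature rather than argued. The overall skeleton is the same: K-amenability plus Theorem \ref{thm:iso_Ktheory} turns the $G$-ideal hypothesis into vanishing K-theory of a kernel, the non-tempered dual is identified with a half-open interval, and one shows K-triviality forces the corresponding open set to be a terminal segment. The difference is where the K-theory is computed. The paper never analyzes ideals of $C^*(G)$ directly: after passing to $\mathrm{SO}_0(1,n)$ via the double cover, it compresses everything into the commutative spherical Hecke algebra $\mathcal{H}_\mu(K\backslash G/K)=p_KC^*_\mu(G)p_K$. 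The class-one hypothesis is used exactly once, in Lemma \ref{prop:iso_Ktheory}, to show that $p_KC^*_\mu(G)$ restricts to an imprimitivity bimodule between $\ker s\vert$ and $\ker s$, so that $s$ is a K-isomorphism iff $s\vert$ is; from then on the algebra is commutative, its spectrum is identified with $(0,\rho]\cup i\mathbb{R}$ by elementary means (Propositions \ref{prop:sph_func_char} and \ref{prop:parameter_space}), the $L^{p+}$-spectra are quoted from \cite{deLaat_2019}, and the obstruction is simply that a closed subset of $(0,\lambda_0]$ containing $\lambda_0$ which is not the whole interval has a nonempty compact open piece, i.e.\ a projection with nonzero $K_0$-class. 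You instead stay with the noncommutative ideals $I_U\subseteq J_r\subseteq C^*(G)$, so you must pay for the K-theory computation with heavy structural input: the Fell topology of the full non-tempered dual, the continuous-trace property of the complementary-series part of $C^*(G)$ (Fell) so that Dixmier--Douady trivializes bounded components, and Lipsman's theorem identifying which complementary series representations are $L^{p+}$ --- exactly the three facts you flag as the main obstacle. These are all available in the literature, so your proof can be completed; in fact the source of the paper contains a commented-out earlier draft for $\mathrm{SL}(2,\mathbb{C})$ that follows your route almost verbatim (Fell's continuous-trace lemma, Dixmier--Douady, Lipsman's theorem), which the author evidently replaced by the Hecke-algebra reduction precisely because the corner trick makes all of that machinery unnecessary. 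Your two endgame obstructions (bounded component $\Rightarrow K_1=\mathbb{Z}$; endpoint component killed by $K_*(J_r)=0$) and the paper's (compact open piece $\Rightarrow K_0\neq 0$) are dual formulations of the same topological fact.

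Two concrete corrections. First, your justification of K-amenability is invalid as stated: real rank one does not imply K-amenability --- $\mathrm{Sp}(n,1)$ and $F_{4(-20)}$ are rank one, have property (T), and are therefore not K-amenable. The correct argument, used in the paper, is that these groups have the Haagerup property and Haagerup groups are K-amenable by Tu's theorem \cite{tu99}. Second, the phrase ``$J_r$ is a continuous field with fibre $\mathbb{C}$ at $s=1$'' needs care: $J_r$ has Hausdorff spectrum $(0,1]$ and hence is a continuous field of elementary C*-algebras, but it is \emph{not} continuous trace globally (the fibre dimension drops at the endpoint), so Dixmier--Douady may only be applied to subquotients whose spectrum lies in $(0,1)$. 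Your argument in effect only uses such pieces --- bounded components, and the quotient $J_r/I_{(a,1]}$ with spectrum $(0,a]$ --- but you should say so explicitly, routing them through Fell's continuous-trace quotients (ideals of continuous-trace algebras are continuous trace, and Fell's condition is local), and noting, as you do, that the degenerate endpoint itself is bypassed entirely because $K_*(J_r)=0$ comes for free from K-amenability.
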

\begin{rem}
 Theorem \ref{thmA} was proved for $\mathrm{SL}(2,\mathbb{R})$ in \cite[Theorem 7.3]{wiersma} by means of an explicit representation theoretic argument. This argument can be modified to work for $\mathrm{SL}(2,\mathbb{C})$ (see \cite{dabeler}]). The proof presented in this article uses less about the representation theory of the groups. Also, the proof provides a more general strategy, which can be applied to groups whose non-tempered irreducible unitary representations are class-one (see Section \ref{sec:special_groups_hecke_alg} for the definition of a non-tempered representation and Section \ref{sec:prelim} for the definition of a class-one representation).
Exotic group C*-algebras of more general Lie groups have been studied in \cite{SW2} and \cite{deLaat_2019}. The results presented here complement these, in the sense that they provide, for certain groups, a full description of the weak*-closed $G$-ideals in the Fourier-Stieltjes algebra of the group.
\end{rem}
\section*{Acknowledgements}
I thank Tim de Laat and Siegfried Echterhoff for interesting discussions and useful comments.

The author is supported by the Deutsche Forschungsgemeinschaft under Germany's Excellence Strategy - EXC 2044 - 390685587, Mathematics M\"unster: Dynamics - Geometry - Structure.

\section{Preliminaries}\label{sec:prelim}
Let $G$ be a locally compact group and $K\leqslant G$ a compact subgroup. In this section, we assume $(G,K)$ to be a Gelfand pair, i.e.
\[
	C_c(K\backslash G/ K) = \lbrace f \in C_c(G) \mid f(ksk') =f(s)\, \forall s\in G,\, \forall k,\,k'\in K \rbrace
\]
is a commutative involutive subalgebra of the involutive group algebra $C_c(G)$ of $G$.

Recall that a function $\varphi\in C(K\backslash G/K)$ with $\varphi \neq 0$ is called spherical if it satisfies 
\begin{align*}
	\varphi(s)\varphi(t) = \int_K \varphi(skt)\,\mathrm{d}\mu_K(k)
\end{align*}
for all $s,t\in G$, where $\mu_K$ denotes the normalized Haar measure on $K$.
We denote the set of  positive definite spherical functions by $\mathcal{S}(K\backslash G/K)$. 

An irreducible unitary representation $\pi: G \to \mathcal{U}(V)$ is called a class-one representation
if the linear subspace $V^K$ of $K$-invariant vectors is non-trivial. In that case the dimension
of $V^K$ is equal to 1 and for a normalized $K$-invariant vector $v\in V^K$, the function
$\langle \pi(\cdot) v, v \rangle$ is a positive definite spherical function.  On the other hand,
if $\varphi \in \mathcal{S}(K\backslash G/K)$ is a positive definite spherical function on $G$, then
the GNS-construction $(L^2(G,\varphi),\pi_\varphi)$ of $\varphi$ defines a
class-one representation of $G$. For more details we refer the reader to \cite{van2009introduction}.
\section{Groups whose non-tempered irreducible unitary representations are class-one}\label{sec:special_groups_hecke_alg}

Let $(G,K)$ be a Gelfand pair consisting
of a locally compact group $G$ and a compact subgroup $K$ of $G$. The spherical unitary representation theory generally contains much information about the group $G$.
In the following, however, we will focus on the, from the point of view of representation theory,  rather pathological assumption that the space $\widehat{G}\setminus \widehat{G}_r$ consists of class-one representations, where $\widehat{G}$ is the unitary dual of $G$ and $\widehat{G}_r$ is the subset of $\widehat{G}$ consisting of all irreducible representations weakly contained in the left regular representation of $G$. For this reason we start the section with some examples.

Motivated by the the representation theory of semisimple Lie groups, a unitary representation of $G$ is said to be tempered if it is weakly contained in the left-regular representation $\lambda_G$. Thus, the non-tempered irreducible unitary representations of $G$ are the elements of $\widehat{G}\setminus \widehat{G}_r$.
\begin{ex}\label{ex:sphericalGrps}
	The first and for our purpose most important examples of groups $G$ which satisfy the property that $\widehat{G}\setminus \widehat{G}_r$ consists of class-one representations are the groups $\mathrm{SL}(2,\mathbb{R})$ and $\mathrm{SL}(2,\mathbb{C})$. The property follows in these examples directly by the well-known classification of the unitary dual of these groups, as can be found for example in \cite[Chapter XVI §1]{knapp}.
		
	Further examples are given by certain subgroups of automorphism groups $\mathrm{Aut}(T)$ of a locally finite homogeneous  tree $T$. 
	In the case of a closed subgroup $G$ of $\mathrm{Aut}(T)$ which has the independence property and acts transitively on the boundary of $T$, it can be shown that certain vector states of irreducible representations of $G$, which are not class-one, have compact support \cite[Lemma 19]{amann}, \cite[Chapter III, Proposition 3.2]{figaharmonic}  (see also \cite{olshanskii}). This implies in particular that $\widehat{G}\setminus \widehat{G}_r$ consists of $L^2$-representations.
\end{ex}
The assumption that the non-tempered irreducible unitary representations of $G$ consists of class-one representations makes it particularly easy to determine the K-theory of group C*-algebras of $G$ relative to $C^*_r(G)$. In this context, the commutative C*-algebra
$\mathcal{H}(K\backslash G/K)$, defined as
the closure of $C_c(K\backslash G/K)$ in the universal group C*-algebra $C^*(G)$ of $G$, is of particular interest. More generally, we make the following definition.
\begin{definition}
	For a general group C*-algebra $C_\mu^*(G)$ we write 
	$\mathcal{H}_\mu(K\backslash G/K)$ for the commutative C*-algebra 
	$\overline{C_c(K\backslash G/K)}\subseteq C^*_\mu(G)$.
\end{definition}

The following observation, and the main result of this section, specifies the relation between the K-theory of a group C*-algebra $C_\mu^*(G)$ of $G$ and of its commutative subalgebra $\mathcal{H}_\mu(K\backslash G/K)$.
\begin{lemma}\label{prop:iso_Ktheory}
	Let $(G,K)$ be a Gelfand pair consisting of a second countable locally compact group $G$ and a compact subgroup $K$ of $G$. 
	Suppose that $\widehat{G}\setminus \widehat{G}_r $ consists of class-one representations. Let $C_\mu^*(G)$ be a group C*-algebra and $\mathcal{H}_\mu(K\backslash G/K)$ as above. Then the canonical quotient map
	$s: C_\mu^*(G) \to C_r^*(G)$ induces an isomorphism in K-theory if and only if
	the canonical quotient map $s\vert:\mathcal{H}_\mu(K\backslash G/K)\to \mathcal{H}_r(K\backslash G/K)$, i.e. the restriction of $s$ to $\mathcal{H}_\mu(K\backslash G/K)$ and
	 $\mathcal{H}_r(K\backslash G/K)$, does.
\end{lemma}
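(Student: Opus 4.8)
The plan is to reduce everything to a statement about the ideal $J_\mu := \ker\!\big(s\colon C^*_\mu(G)\to C^*_r(G)\big)$ compressed by the $K$-averaging projection. Let $p_K\in M(C^*_\mu(G))$ denote the projection in the multiplier algebra given by the normalized Haar measure $\mu_K$ of $K$, viewed as an idempotent measure on $G$; in any unitary representation $\pi$ it acts as the orthogonal projection $\int_K\pi(k)\,\mathrm{d}\mu_K(k)$ onto the space $V^K$ of $K$-invariant vectors, and the canonical maps send $p_K$ to $p_K$. Compressing $C_c(G)$ by $p_K$ yields exactly the bi-$K$-invariant functions, so that $\mathcal{H}_\mu(K\backslash G/K)=p_K\,C^*_\mu(G)\,p_K$ and likewise for the reduced algebra, and $s\vert$ is the compression of $s$ by $p_K$.

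First I would record the homological reduction. The six-term exact sequence in K-theory applied to $0\to J_\mu\to C^*_\mu(G)\xrightarrow{s}C^*_r(G)\to 0$ shows that $s_*$ is an isomorphism in both degrees if and only if $K_*(J_\mu)=0$. Since $s$ is surjective and fixes $p_K$, the restriction $s\vert$ is a surjection $\mathcal{H}_\mu(K\backslash G/K)\to\mathcal{H}_r(K\backslash G/K)$ whose kernel is $p_K C^*_\mu(G)p_K\cap J_\mu=p_K J_\mu p_K$; the same argument then gives that $(s\vert)_*$ is an isomorphism if and only if $K_*(p_K J_\mu p_K)=0$. Hence the lemma is equivalent to the assertion that $K_*(J_\mu)$ and $K_*(p_K J_\mu p_K)$ vanish simultaneously.

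The crucial step, and the only place the hypotheses are used, is to show that $p_K$ is a full projection in $J_\mu$, i.e. $\overline{J_\mu\,p_K\,J_\mu}=J_\mu$. I would phrase fullness representation-theoretically: the closed ideal of $J_\mu$ generated by $p_K$ is the intersection of the kernels of those irreducible representations $\rho$ of $J_\mu$ whose multiplier extension kills $p_K$, so $p_K$ is full precisely when $\bar\rho(p_K)\neq 0$ for every irreducible representation $\rho$ of $J_\mu$. Each such $\rho$ extends uniquely to an irreducible representation $\pi$ of $C^*_\mu(G)$ that does not vanish on $J_\mu$; equivalently, $\pi$ is an irreducible $\mu$-representation of $G$ that does not factor through $C^*_r(G)$, i.e. a non-tempered element of $\widehat{G}$. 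By hypothesis $\widehat{G}\setminus\widehat{G}_r$ consists of class-one representations, so the representation space of $\pi$ carries a nonzero $K$-invariant vector and therefore $\bar\rho(p_K)=\bar\pi(p_K)\neq 0$. Thus no irreducible representation of $J_\mu$ annihilates $p_K$, and $p_K$ is full.

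Finally I would pass from fullness to the K-theoretic equivalence. Because $G$ is second countable, $C^*_\mu(G)$ and hence $J_\mu$ are separable, in particular $\sigma$-unital; by Brown's stable isomorphism theorem a full projection in the multiplier algebra of a $\sigma$-unital C*-algebra gives a full corner that is stably isomorphic to the whole algebra. Therefore $p_K J_\mu p_K\otimes\mathcal{K}\cong J_\mu\otimes\mathcal{K}$, so $K_*(p_K J_\mu p_K)\cong K_*(J_\mu)$ and the two vanish together, as required. The main obstacle is the fullness of $p_K$ in $J_\mu$; I expect the points demanding care there to be the identification of the irreducible representations of the ideal $J_\mu$ with the non-tempered irreducible $\mu$-representations of $G$, which is what lets the class-one hypothesis be invoked, and the precise placement of $p_K$ in the multiplier algebra of $J_\mu$ together with its action as the averaging projection in each representation.
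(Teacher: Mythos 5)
Your proof is correct and takes essentially the same route as the paper's: both reduce, via the six-term exact sequence, to showing that $K_*(\ker s)$ and $K_*(\ker s\vert) = K_*(p_K \ker s\, p_K)$ vanish simultaneously, and both deduce this from the class-one hypothesis, which forces every irreducible representation of $\ker s$ to act nontrivially on $p_K$, i.e.\ forces the corner to be full. The only difference is packaging: you conclude with Brown's stable isomorphism theorem for the full corner $p_K \ker s\, p_K \subseteq \ker s$, whereas the paper restricts the partial imprimitivity bimodule $X_\mu = p_K C^*_\mu(G)$ along the Rieffel correspondence to obtain a $\ker s\vert$--$\ker s$ imprimitivity bimodule --- equivalent devices for separable C*-algebras.
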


\begin{proof}
Let $\iota_G: G \to \mathcal{UM}(C^*(G))$ be the universal unitary representation of $G$, let $q:C^*(G)\to C^*_\mu(G)$ be the canonical quotient map, and $\iota_{G,\mu} = \overline{q}\circ \iota_G$, where $\overline{q}:\mathcal{M}(C^*(G))\to \mathcal{M}(C^*_\mu(G))$ is the unique extension of $q$ to a *-homomorphism. Furthermore, let
$
	p_K = \iota_{G,\mu}(\mu_K)\in \mathcal{M}(C^*_\mu(G))
$
be the orthogonal projection given by 
$p_K (x)= \int_K \iota_{G,\mu}(k)x \mathrm{d}\mu_K(k)$ for $x\in C^*_\mu(G)$.

First of all note that the equality of $p_K C_c(G) p_K$ and $C_c(K\backslash G/K)$, together with the continuity of the map $C^*_\mu(G) \to C^*_\mu(G),\, b \mapsto p_K b p_K$, implies the identity
$
	\mathcal{H}_\mu(K\backslash G/K) = p_K C^*_\mu(G) p_K.
$

Therefore, the right ideal
$
	X_\mu = p_KC^*_\mu(G)
$
in $C^*_\mu(G)$
defines, in a canonical way, a partial $\mathcal{H}_\mu(K\backslash G/K)$-$C^*_\mu(G)$-imprimitivity bimodule that restricts to an $\mathcal{H}_\mu(K\backslash G/K)$-$\overline{C^*_\mu(G)p_K C^*_\mu(G)}$-imprimitivity bimodule. The spectrum of $\overline{C^*_\mu(G)p_K C^*_\mu(G)}$ identifies
with the open subset of $\widehat{C_\mu^*(G)}$ consisting of all irreducible representations of
$C^*_\mu(G)$ that do not vanish on the ideal $\overline{C^*_\mu(G)p_K C^*_\mu(G)}$. These representations are exactly
the class-one representations of $(G,K)$ that integrate to $C^*_\mu(G)$.

The assumption that $\widehat{G}\setminus\widehat{G}_r $ consists of class-one representations now implies that the bimodule $X_\mu$ restricts to 
a $\ker s\vert$-$\ker s$ imprimitivity bimodule $(X_\mu)_{\ker s}=\overline{ X_\mu \ker s}$. 
Lemma \ref{prop:iso_Ktheory} is therefore an immediate consequence of the 
six-term-exact-sequence in K-theory and the fact that $(X_\mu)_{\ker s}$ induces an isomorphism in K-theory.
\end{proof}
The following proposition is  well known. For the convenience of the reader we provide the proof.
\begin{prop}\label{prop:sph_func_char}
	Let $\Delta(\mathcal{H}(K\backslash G /K))$ be the Gelfand spectrum of 
	the commutative C*-algebra $\mathcal{H}(K\backslash G /K)$.
	
	The map from $\mathcal{S}(K\backslash G/K)$ to $\Delta(\mathcal{H}(K\backslash G/K))$ sending
a positive definite spherical function $\varphi$ to the character $\chi_\varphi$ 
given by 
\begin{align*}
	\chi_\varphi(f) =  \int f(s)\varphi(s^{-1})\, \mathrm{d} \mu_G(s)
\end{align*}
for $f\in C_c(K\backslash G/K)$ establishes a bijection, and, after equipping $\mathcal{S}(K\backslash G/K)$ with the topology of uniform convergence on compact subsets of $G$, even a homeomorphism.
\end{prop}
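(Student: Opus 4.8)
The plan is to establish the statement in four steps — well-definedness, injectivity, surjectivity, and the homeomorphism property — treating surjectivity and the non-trivial direction of the homeomorphism as the two substantial points.

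First I would check that $\chi_\varphi$ is a well-defined character. That $\chi_\varphi$ is multiplicative on the convolution algebra $C_c(K\backslash G/K)$ is a direct consequence of the spherical identity: writing out $\chi_\varphi(f*g)$, substituting $u=t^{-1}s$, and then using that $f$ is right-$K$-invariant and $g$ is left-$K$-invariant to average the integrand over $K$, the identity $\int_K\varphi(u^{-1}kt^{-1})\,\mathrm{d}\mu_K(k)=\varphi(u^{-1})\varphi(t^{-1})$ collapses the double integral into $\chi_\varphi(f)\chi_\varphi(g)$. Positive definiteness of $\varphi$ (together with the normalization $\varphi(e)=1$, which follows from the spherical identity at $s=t=e$ and $|\varphi|\le\varphi(e)$) ensures that $\chi_\varphi$ is bounded for the norm inherited from $C^*(G)$, so it extends to a genuine, nonzero character of $\mathcal{H}(K\backslash G/K)$. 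Injectivity is then immediate: if $\chi_\varphi=\chi_\psi$ then $\varphi$ and $\psi$ have the same integral against every $f\in C_c(K\backslash G/K)$, and since both are continuous and bi-$K$-invariant this forces $\varphi=\psi$.

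The heart of the argument is surjectivity. Given a character $\chi$, I would define a functional $\tilde\chi$ on $C^*(G)$ by $\tilde\chi(a)=\chi(p_K a p_K)$; positivity holds because $p_K C^*(G)p_K=\mathcal{H}(K\backslash G/K)$ is hereditary in $C^*(G)$, and evaluating $\chi$ on the approximate unit $p_K e_\lambda p_K$ shows $\|\tilde\chi\|=1$, so $\tilde\chi$ is a state. Passing to its GNS triple $(\pi,H,\Omega)$, a Cauchy–Schwarz equality argument applied to $\langle\pi(p_K)\Omega,\Omega\rangle=\|\pi(p_K)\Omega\|^2=1=\|\Omega\|^2$ shows $\pi(p_K)\Omega=\Omega$, i.e.\ $\Omega$ is $K$-fixed. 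The matrix coefficient $\varphi(s)=\langle\pi(s^{-1})\Omega,\Omega\rangle$ is then continuous, positive definite and bi-$K$-invariant, and unwinding the definitions gives $\chi_\varphi=\chi$. It remains to see that $\varphi$ is actually spherical, and for this I would run the computation of Step~1 in reverse: multiplicativity of $\chi=\chi_\varphi$ says that the bi-$K$-invariant function $(t,u)\mapsto\int_K\varphi(u^{-1}kt^{-1})\,\mathrm{d}\mu_K(k)-\varphi(u^{-1})\varphi(t^{-1})$ integrates to zero against every product $f(t)g(u)$ with $f,g\in C_c(K\backslash G/K)$, and a density argument forces this function to vanish identically, which is exactly the spherical identity. (Alternatively one could invoke the imprimitivity bimodule $p_KC^*(G)$ from the proof of Lemma~\ref{prop:iso_Ktheory} to identify $\chi$ with a class-one representation and then apply the characterization recalled in Section~\ref{sec:prelim}.)

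Finally, for the homeomorphism I would treat the two directions separately. That $\varphi\mapsto\chi_\varphi$ is continuous is elementary: uniform convergence $\varphi_i\to\varphi$ on the compact support of each $f\in C_c(K\backslash G/K)$ gives $\chi_{\varphi_i}(f)\to\chi_\varphi(f)$, hence weak*-convergence of the characters. The non-trivial direction is continuity of the inverse. Here I would first note that since all functions involved are bi-$K$-invariant, replacing a test function by its bi-$K$-average shows that weak*-convergence $\chi_{\varphi_i}\to\chi_\varphi$ against $C_c(K\backslash G/K)$ already yields weak*-convergence $\varphi_i\to\varphi$ as elements of $L^\infty(G)=L^1(G)'$. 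Since every $\varphi\in\mathcal{S}(K\backslash G/K)$ satisfies $\varphi(e)=1$, the family is uniformly bounded, and I would invoke Raikov's theorem, according to which on the set of normalized continuous positive definite functions the weak*-topology coincides with the topology of uniform convergence on compact sets; this delivers $\varphi_i\to\varphi$ uniformly on compacta. The main obstacle is the surjectivity step, namely producing a genuinely spherical — not merely positive definite bi-$K$-invariant — function from an abstract character, while the delicate point of the homeomorphism is precisely the classical Raikov-type equivalence, which I would cite rather than reprove.
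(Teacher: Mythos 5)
Your proof is correct, but the surjectivity step takes a genuinely different route from the paper. The paper constructs the inverse map by Rieffel induction: given a character $\chi$, it induces along the partial imprimitivity bimodule $Y = C^*(G)p_K$ to obtain an irreducible representation $Y\otimes_\chi\mathbb{C}$ of $C^*(G)$, and then reads off the spherical function as the matrix coefficient of the (automatically one-dimensional) $K$-fixed subspace, quoting the correspondence between class-one representations and positive definite spherical functions recalled in Section~\ref{sec:prelim}. You instead compress the character to a state $\tilde\chi = \chi(p_K\cdot p_K)$ on $C^*(G)$, run the GNS construction, show the cyclic vector is $K$-fixed by the Cauchy--Schwarz equality case, and then verify sphericality by hand, reversing the multiplicativity computation and using a bi-$K$-averaging density argument --- the alternative you mention in parentheses is precisely the paper's argument. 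Your route is more elementary and self-contained (no Hilbert modules or induced representations, and no need for irreducibility of the representation, since you verify the functional equation directly); the paper's route is shorter because irreducibility and the class-one property come for free from Morita theory, and it reuses the bimodule already set up in the proof of Lemma~\ref{prop:iso_Ktheory}. For the homeomorphism, your argument coincides with the paper's: both pass from Gelfand-spectrum convergence to $\sigma(L^\infty,L^1)$-convergence via bi-$K$-averaging of test functions (your remark on uniform boundedness, $\|\varphi\|_\infty\le\varphi(e)=1$, is needed here and is a point the paper glosses over), and both then cite the coincidence of the weak*-topology and the compact-open topology --- you via Raikov's theorem on normalized continuous positive definite functions, the paper via the specialization of that fact to spherical functions in van Dijk's book. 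You also spell out well-definedness and injectivity, which the paper leaves implicit.
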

\begin{proof}
	We adopt some of the notations from the previous proof. So let $Y = C^*(G)p_K$ be the partial $C^*(G)$-$\mathcal{H}(K\backslash G /K)$-imprimitivity bimodule. 
	First of all, we will describe the inverse of the above mapping.
	Note that for a character $\chi\in \mathcal{H}(K\backslash G/K)$, $Y-\mathrm{Ind}\, \chi = Y\otimes_\chi \mathbb{C}$ is an irreducible representation $(V,\pi_*)$ of $C^*(G)$, where $V=Y\otimes_\chi \mathbb{C}$ denotes the balanced tensor product of the right Hilbert $\mathcal{H}(K\backslash G/K)$-module $Y$ and the *-representation $(\mathbb{C},\chi)$ of $\mathcal{H}(K\backslash G/K)$, 
	a Hilbert space with a canonical left multiplication $\pi_*:C^*(G)\to \mathcal{L}(V)$ by $C^*(G)$. Let $(V,\pi)$ denotes the corresponding unitary representation of $(V,\pi_*)$.
	The restriction $(V\vert_{\mathcal{H}(K\backslash G/K)}, \pi_*\vert_{\mathcal{H}(K\backslash G/K)}) $ of $(V,\pi_*)$ to $\mathcal{H}(K\backslash G/K)$ is unitary equivalent to the character $(\mathbb{C},\chi)$, where $V\vert_{\mathcal{H}(K\backslash G/K)}$ denotes the one-dimensional Hilbert space $\pi(\mathcal{H}(K\backslash G /K))V$. Let $v\in V\vert_{\mathcal{H}(K\backslash G/K)}$ be a vector of length one. Then $\varphi_\chi = \langle \pi(\cdot)v,v\rangle$
	is a positive definite spherical function. The map $\mathcal{H}(K\backslash G/K) \to \mathcal{S}(K\backslash G/K),\, \chi \mapsto \varphi_\chi$ defines the inverse map  of $\mathcal{S}(K\backslash G/K) \to \Delta(\mathcal{H}(K\backslash G/K)) ,\, \varphi \mapsto \chi_\varphi$.
	
	The continuity of $\mathcal{S}(K\backslash G/K) \to \Delta(\mathcal{H}(K\backslash G/K)) ,\, \varphi \mapsto \chi_\varphi$ is easy to check. So it remains to show that the inverse map $\Delta(\mathcal{H}(K\backslash G/K)) \to \mathcal{S}(K\backslash G/K),\, \chi \mapsto \varphi_\chi$ is continuous. In order to prove this, let 
	$(\chi_i)_{i\in I} \in\mathcal{H}(K\backslash G/K)^I$ be a convergent net with  $\chi \in \mathcal{H}(K\backslash G/K)$ as limit point. For all $f\in C_c(G)$ we have
	\begin{align*}
		\int_G f(s) \varphi_{\chi_i}(s^{-1})\mathrm{d}\mu_G(s) &= \int_G (p_K f p_K)(s) \varphi_{\chi_i}(s^{-1})\mathrm{d}\mu_G(s)\\ &\to
		\int_G (p_K f p_K)(s)  \varphi_{\chi}(s^{-1})\mathrm{d}\mu_G(s)\\ &=\int_G f(s)  \varphi_{\chi}(s^{-1})\mathrm{d}\mu_G(s)
	\end{align*}
	as $i \to \infty$. Hence, $ \varphi_{\chi_i} \to \varphi_{\chi}$ with respect to $\sigma(L^\infty(G),L^1(G))$.
	The topology
	of uniform convergence on compact subsets of $G$ and the topology induced by $\sigma(L^\infty(G),L^1(G))$
	 coincide on $\mathcal{S}(K\backslash G /K)$ (see \cite[Proposition 6.4.2]{van2009introduction}). This proves the continuity of $\mathcal{H}(K\backslash G/K) \to \mathcal{S}(K\backslash G/K),\, \chi \mapsto \varphi_\chi$. 
\end{proof}

\section{The examples $\mathrm{SL}(2,\mathbb{R})$ and $\mathrm{SL}(2,\mathbb{C})$}
Let $\mathbb{F}$ be the real or complex number field.
As indicated in Example \ref{ex:sphericalGrps}, $\widehat{\mathrm{SL}(2,\mathbb{F})}\setminus \widehat{\mathrm{SL}(2,\mathbb{F})}_r$ consists of class-one representations.
The special linear group $\mathrm{SL}(2,\mathbb{F})$ is a double cover of the identity component of the Lorentz group 
$\mathrm{SO}_0(1,n)$ (with $n=2$ in the case of $\mathbb{F}= \mathbb{R}$ and $n=3$ in the case of $\mathbb{F}= \mathbb{C}$). Hence it is, by \cite[Lemma 5.1]{deLaat_2019}, in order to prove Theorem \ref{thmA}, equivalent to consider the groups $\mathrm{SO}_0(1,n)$ with $n=2,3$ instead of $\mathrm{SL}(2,\mathbb{F})$.  Let us therefore assume from now on $G$ to be $\mathrm{SO}_0(1,n)$ with $n=2,3$. 

We write
\begin{align*}
	K = \left\lbrace \begin{pmatrix}
	1 & 0 \\ 
	0 & k
	\end{pmatrix} \in \mathrm{SO}_0(1,n)\mid k\in \mathrm{SO}(n)\right\rbrace \leq G
\end{align*}
for the canonical maximal compact subgroup, 
\begin{align*}
	A = \left\lbrace a_t = \begin{pmatrix}
	\cosh t & 0 & \sinh t \\ 
	0 & 1 & 0 \\ 
	\sinh t & 0 & \cosh t 
	\end{pmatrix} \in  M_{n+1}(\mathbb{R})
	\mid t\in \mathbb{R} \right\rbrace,
\end{align*}
 and
\begin{align*}
	\overline{A}^+ = \lbrace a_t \in A \mid t\geq 0 \rbrace.
\end{align*}
With regard to the polar decomposition $G = K\overline{A}^+ K$, the space  $\mathcal{S}(K\backslash G/K)$ of positive definite spherical functions can, as was proved in \cite{kostant1969}, be described by the parameter space $\mathcal{P}=(0,\rho]\,\cup \, i\mathbb{R}$, where $\rho = \frac{n-1}{2}$.
More precisely, there is a one-to-one map from $\mathcal{P}=(0,\rho]\cup i\mathbb{R}$  to $\mathcal{S}(K\backslash G/K)$
sending an element $\lambda\in \mathcal{P}$ to the function $\varphi_\lambda$ uniquely determined by
\begin{align*}
	\varphi_\lambda(a_t) =  \exp\left((\lambda-\rho) t \right)
\end{align*}
for $a_t\in \overline{A}^+$ (see e.g. \cite[Lemma 5.3]{deLaat_2019}).
\begin{prop}\label{prop:parameter_space}
	The map $\Phi: \mathcal{P}\to \mathcal{S}(K\backslash G/K),\, \lambda \mapsto \varphi_\lambda$ is
	an homeomorphism.
\end{prop}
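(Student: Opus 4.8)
The plan is to take as given, from \cite[Lemma 5.3]{deLaat_2019}, that $\Phi$ is a bijection and that $\varphi_\lambda(a_t)=\exp((\lambda-\rho)t)$ for $a_t\in\overline{A}^+$; it then remains only to check that $\Phi$ and $\Phi^{-1}$ are continuous, where $\mathcal{S}(K\backslash G/K)$ carries the topology of uniform convergence on compact subsets of $G$ (as in Proposition~\ref{prop:sph_func_char}) and $\mathcal{P}=(0,\rho]\cup i\mathbb{R}$ carries the subspace topology from $\mathbb{C}$. The first reduction I would make is to pass from $G$ to $\overline{A}^+$. Every element of $\mathcal{S}(K\backslash G/K)$ is $K$-bi-invariant, and since $G=K\overline{A}^+K$ with $K$ compact and the Cartan projection $g\mapsto t$ (with $g\in Ka_tK$) continuous, every compact subset of $G$ is contained in $K\{a_t: 0\le t\le T\}K$ for some $T$, while each $\{a_t:0\le t\le T\}$ is itself compact in $G$. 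Consequently a net in $\mathcal{S}(K\backslash G/K)$ converges uniformly on compact subsets of $G$ if and only if the associated one-variable functions $t\mapsto\varphi(a_t)$ converge uniformly on compact subsets of $[0,\infty)$, so the whole statement reduces to the behaviour of $t\mapsto\exp((\lambda-\rho)t)$ in the parameter $\lambda$.

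Granting this reduction, continuity of $\Phi$ is the easy direction: the map $(\lambda,t)\mapsto\exp((\lambda-\rho)t)$ is jointly continuous, so if $\lambda_i\to\lambda$ in $\mathcal{P}$ then, using that the tail of $(\lambda_i)$ lies in a compact neighbourhood of $\lambda$ and that $\exp$ is uniformly continuous on bounded subsets of $\mathbb{C}$, one obtains $\exp((\lambda_i-\rho)t)\to\exp((\lambda-\rho)t)$ uniformly for $t\in[0,T]$. This also settles continuity at the gluing point $0$: as $\lambda\to 0$ either along $(0,\rho]$ or along $i\mathbb{R}$ one has $\varphi_\lambda(a_t)\to e^{-\rho t}=\varphi_0(a_t)$, consistent with $\mathcal{P}$ being connected at the origin.

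The substance of the proposition is continuity of $\Phi^{-1}$, i.e.\ that $\varphi_{\lambda_i}\to\varphi_\lambda$ uniformly on compact sets forces $\lambda_i\to\lambda$ in $\mathbb{C}$; this is the step I expect to be the main obstacle, and it amounts to preventing the parameter from escaping to infinity. The real parts cause no trouble, since $\operatorname{Re}\lambda_i\in[0,\rho]$ by definition of $\mathcal{P}$. The danger is $|\operatorname{Im}\lambda_i|\to\infty$, i.e.\ escape along the imaginary axis. To exclude this I would argue that $|\exp((\lambda_i-\rho)t)|=e^{(\operatorname{Re}\lambda_i-\rho)t}$ stays bounded above and bounded away from $0$ on $[0,1]$, while the phase $e^{i\operatorname{Im}\lambda_i\,t}$ oscillates with unbounded frequency; such functions cannot converge uniformly on $[0,1]$, as one sees from an $L^2[0,1]$ obstruction, namely that two oscillatory exponentials of widely separated frequencies are nearly orthogonal, so the sequence fails to be $L^2$-Cauchy, whereas uniform convergence would make it so. Hence $(\lambda_i)$ is bounded in $\mathbb{C}$; and any subnet limit $w$ satisfies $\exp((w-\rho)t)=\exp((\lambda-\rho)t)$ for all $t\in[0,1]$, which forces $w=\lambda$. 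A bounded net all of whose subnet limits equal $\lambda$ converges to $\lambda$, giving $\lambda_i\to\lambda$.

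Finally, I would either leave the last two paragraphs as they stand or repackage them more cleanly: the oscillation argument shows precisely that $\Phi$ is \emph{proper}, since the family $\{\varphi_{is}\}_{s\in\mathbb{R}}$ leaves every compact subset of $\mathcal{S}(K\backslash G/K)$ as $|s|\to\infty$, and by Proposition~\ref{prop:sph_func_char} the target $\mathcal{S}(K\backslash G/K)\cong\Delta(\mathcal{H}(K\backslash G/K))$ is locally compact Hausdorff. A continuous proper bijection into a locally compact Hausdorff space is closed, hence a homeomorphism, which concludes the proof.
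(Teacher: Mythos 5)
Your proof is correct, but it takes a genuinely different route from the paper's, which is far shorter and, as it happens, glosses over exactly the point you treat most carefully. For the continuity of $\Phi$, the paper does not reduce to the Cartan decomposition at all: it invokes the homeomorphism $\mathcal{S}(K\backslash G/K)\cong\Delta(\mathcal{H}(K\backslash G/K))$ of Proposition~\ref{prop:sph_func_char} and dominated convergence, i.e.\ it checks weak* convergence of the associated characters against functions in $C_c(K\backslash G/K)$; your direct locally uniform estimate on sets $K\{a_t: 0\le t\le T\}K$ is an equally valid, more hands-on substitute. The real divergence is in the inverse direction: the paper disposes of it in one line via the identity $\Phi^{-1}-\rho=\log\circ\,\mathrm{ev}_{a_1}$ together with continuity of the evaluation map $\mathrm{ev}_{a_1}$ on $C(G)$. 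That formula is delicate: $\mathrm{ev}_{a_1}(\varphi_\lambda)=e^{\lambda-\rho}$ identifies $\lambda=is$ with $\lambda=i(s+2\pi)$, so no single-valued branch of $\log$ inverts it globally on the image (a circle together with a segment), and a net $\varphi_{\lambda_i}\to\varphi_\lambda$ only yields $\lambda_i+2\pi i k_i\to\lambda$ for some integers $k_i$ unless one controls the winding; making this airtight requires, e.g., evaluations at $a_t$ for all small $t$ or a branch-tracking argument. Your argument --- boundedness of the parameter net via the $L^2[0,1]$ near-orthogonality of oscillatory exponentials with widely separated frequencies, then uniqueness of subnet limits (equivalently, properness of $\Phi$ plus the fact that a proper continuous bijection onto a locally compact Hausdorff space is a homeomorphism) --- confronts precisely the danger that the $\log$ formula hides, namely escape along $i\mathbb{R}$. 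In short: the paper buys brevity at the cost of a suppressed branch-of-logarithm subtlety, while your proof buys robustness and self-containment at the cost of length; both the reduction to $\overline{A}^+$ and the oscillation estimate are sound as you state them, with only minor looseness in the net-theoretic phrasing of ``bounded'' (one should say the net is eventually bounded, or pass to a subnet escaping to infinity and derive the contradiction there).
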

\begin{proof}
	The continuity of $\Phi$ is directly seen by the identification from Proposition \ref{prop:sph_func_char} and the theorem of the dominated convergence.
	In order to show the continuity of the inverse map of $\Phi$, note that for all $s\in G$ 
	the evaluation map $\mathrm{ev}_s: C(G) \to \mathbb{C}$ is continuous with
	respect to the topology of uniform convergence on compact subsets.
	The identity  $\Phi^{-1} - \rho= \log \circ \mathrm{ev}_{a_1}$ then implies the continuity of
	$\Phi^{-1}$. 
\end{proof}
It is well known that $G$ has the Haagerup property. Since every second countable locally compact group with the Haagerup property is K-amenable (see \cite{tu99}), Theorem \ref{thm:iso_Ktheory} is applicable.
Hence, the canonical quotient map  $s:C^*_{L^{p+}}(G)\to C^*_{r}(G)$, and therefore, by  Lemma \ref{prop:iso_Ktheory},  $s\vert: \mathcal{H}_{L^{p+}}(K\backslash G/K) \to \mathcal{H}_r(K\backslash G/K)$  induces an isomorphism in K-theory for every $p\in [2,\infty)$.
In particular, the kernel $\ker s\vert$ of $s\vert$ has trivial K-theory.

The following proposition is a reformulation of \cite[Proposition 5.4]{deLaat_2019} and describes the commutative subalgebra $\mathcal{H}_{L^{p+}}(K\backslash G/K)$ of the group C*-algebra $C^*_{L^{p+}}(G)$ of $G$ in terms of their Gelfand spectrum.
\begin{prop}\label{prop:spec_of_hecke}
	Let $p\in [2,\infty)$. Then
	\[\Delta(\mathcal{H}_{L^{p+}}(K\backslash G/K)) = \left\lbrace \varphi_\lambda \mid \lambda \in \left(0,\frac{p-2}{p}\rho \right]\cup i\mathbb{R} \right\rbrace, \]
	and 
	\[
	\Delta(\ker s\vert) =\left\lbrace \varphi_\lambda \mid \lambda \in \left(0,\frac{p-2}{p}\rho \right] \right\rbrace.
	\] 
\end{prop}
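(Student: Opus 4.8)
The plan is to realise $\Delta(\mathcal{H}_{L^{p+}}(K\backslash G/K))$ as a subset of the parameter space $\mathcal{P}$ through the homeomorphisms of Propositions~\ref{prop:sph_func_char} and~\ref{prop:parameter_space}, and then to decide which parameters $\lambda$ occur. Since $\mathcal{H}_{L^{p+}}(K\backslash G/K) = p_K C^*_{L^{p+}}(G) p_K$ is the corner of $C^*_{L^{p+}}(G)$ cut out by $p_K$, its characters correspond, exactly as in the proofs of Lemma~\ref{prop:iso_Ktheory} and Proposition~\ref{prop:sph_func_char}, to the class-one representations of $(G,K)$ that integrate to $C^*_{L^{p+}}(G)$. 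Thus $\chi_{\varphi_\lambda} \in \Delta(\mathcal{H}_{L^{p+}}(K\backslash G/K))$ if and only if the class-one representation $\pi_\lambda$ attached to $\varphi_\lambda$ is an $L^{p+}$-representation, and the first assertion reduces to proving that this happens precisely for $\lambda \in (0,\tfrac{p-2}{p}\rho]\cup i\mathbb{R}$.

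For a real parameter $\lambda\in(0,\rho]$ I would read off the integrability of $\varphi_\lambda$ from its growth on $\overline{A}^+$, where $\varphi_\lambda(a_t)=e^{(\lambda-\rho)t}$, and from the Haar density $\delta(t)$ of the decomposition $G=K\overline{A}^+K$, which satisfies $\delta(t)\asymp e^{2\rho t}$ as $t\to\infty$. Bi-$K$-invariance of $\varphi_\lambda$ then gives
\[
	\int_G |\varphi_\lambda|^{p'}\,\mathrm{d}\mu_G \;\asymp\; \int_0^\infty e^{\left(p'(\lambda-\rho)+2\rho\right)t}\,\mathrm{d}t ,
\]
which is finite exactly when $p'(\lambda-\rho)+2\rho<0$, i.e. when $\lambda<\tfrac{p'-2}{p'}\rho$. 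Because the $K$-finite matrix coefficients of $\pi_\lambda$ all decay at the same exponential rate as $\varphi_\lambda$ and the $K$-finite vectors are dense, $\pi_\lambda$ is an $L^{p'}$-representation under exactly this condition; this is the representation-theoretic content that I would import from \cite[Proposition~5.4]{deLaat_2019}. Since $p'\mapsto\tfrac{p'-2}{p'}\rho$ is strictly increasing with infimum $\tfrac{p-2}{p}\rho$ over $p'>p$, the requirement that $\pi_\lambda$ be $L^{p'}$ for all $p'>p$ becomes $\lambda\le\tfrac{p-2}{p}\rho$, the endpoint being included because at $\lambda=\tfrac{p-2}{p}\rho$ the integral diverges for $p'=p$ but converges for every $p'>p$.

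It remains to include the tempered parameters: for $\lambda\in i\mathbb{R}$ the function $\varphi_\lambda$ has magnitude $\asymp e^{-\rho t}$ on $\overline{A}^+$ (up to a polynomial factor), so $\pi_\lambda$ is $L^{2+}$ and hence $L^{p+}$ for every $p\ge 2$; equivalently, these are exactly the tempered class-one representations. Assembling the two cases yields the first displayed equality. For the second, I would use that an ideal of a commutative C*-algebra corresponds to an open subset of its Gelfand spectrum. The map $s\vert$ is the restriction of the surjection $s\colon C^*_{L^{p+}}(G)\to C^*_r(G)$ to the corners cut out by $p_K$, so it is itself surjective, and its target $\mathcal{H}_r(K\backslash G/K)$ has spectrum the tempered spherical functions $\{\varphi_\lambda\mid\lambda\in i\mathbb{R}\}$. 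Hence $\Delta(\mathcal{H}_r(K\backslash G/K))$ embeds as the closed subset of $\Delta(\mathcal{H}_{L^{p+}}(K\backslash G/K))$ corresponding to $i\mathbb{R}$, and $\ker s\vert$ corresponds to the complementary open set $\{\varphi_\lambda\mid\lambda\in(0,\tfrac{p-2}{p}\rho]\}$. One checks openness inside the subspace $(0,\tfrac{p-2}{p}\rho]\cup i\mathbb{R}$ of $\mathcal{P}$: every interior real point is clear, and the top endpoint $\tfrac{p-2}{p}\rho$ acquires the one-sided neighbourhood $(\tfrac{p-2}{p}\rho-\varepsilon,\tfrac{p-2}{p}\rho]$ once the interval $(\tfrac{p-2}{p}\rho,\rho]$ has been deleted from the spectrum.

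The main obstacle is the representation-theoretic step that equates the $L^{p+}$-property of the class-one representation $\pi_\lambda$ with the integrability threshold of the single function $\varphi_\lambda$. This rests on the Harish--Chandra asymptotics of spherical functions — in particular the borderline behaviour at $\lambda=\tfrac{p-2}{p}\rho$ that distinguishes $L^{p+}$ from $L^{p}$, and the polynomial corrections along $i\mathbb{R}$ — together with the fact that all $K$-finite matrix coefficients share the leading decay of $\varphi_\lambda$, so that the whole representation may be tested on $\varphi_\lambda$ alone. This is precisely the input I would borrow from \cite[Proposition~5.4]{deLaat_2019}; the surrounding argument is then the elementary integral estimate above and the point-set topology of $\mathcal{P}$.
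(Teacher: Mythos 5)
Your overall architecture is sound and matches the framework the paper sets up elsewhere: identifying $\Delta(\mathcal{H}_{L^{p+}}(K\backslash G/K))$ with the set of class-one representations whose integrated forms factor through $C^*_{L^{p+}}(G)$ (via the corner $p_KC^*_{L^{p+}}(G)p_K$ and the imprimitivity argument from the proof of Lemma \ref{prop:iso_Ktheory}), the elementary estimate $\int_G|\varphi_\lambda|^{p'}\,\mathrm{d}\mu_G \asymp \int_0^\infty e^{(p'(\lambda-\rho)+2\rho)t}\,\mathrm{d}t$ with the correct endpoint bookkeeping, and the ideal/open-subset correspondence for the second display are all correct. Note also that the paper itself offers no proof of this proposition: it is stated as a reformulation of \cite[Proposition 5.4]{deLaat_2019}, so any honest comparison is between your attempt and that cited result.

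There is, however, a genuine gap, and it sits exactly where the analytic difficulty of the statement is concentrated. Your ``Thus'' in the sentence ``Thus $\chi_{\varphi_\lambda}\in\Delta(\mathcal{H}_{L^{p+}}(K\backslash G/K))$ if and only if $\pi_\lambda$ is an $L^{p+}$-representation'' silently identifies two different conditions: (a) $\pi_\lambda$ integrates to $C^*_{L^{p+}}(G)$, i.e.\ $\|\pi_\lambda(f)\|\leq\|f\|_{C_{L^{p+}}}$ for all $f\in C_c(G)$, which means $\pi_\lambda$ is weakly contained in the class of \emph{all} $L^{p+}$-representations; and (b) $\pi_\lambda$ is itself an $L^{p+}$-representation. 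The implication (b)$\Rightarrow$(a) is immediate and yields the inclusion ``$\supseteq$'' of the first display. The inclusion ``$\subseteq$'' needs (a)$\Rightarrow$(b): one must exclude that some $\pi_\mu$ with $\mu\in(\tfrac{p-2}{p}\rho,\rho]$, although its matrix coefficients fail the $L^{p'}$-integrability, is nevertheless weakly contained in the (huge, non-irreducible) family of $L^{p+}$-representations. Weak containment only controls $|\int f\varphi_\mu\,\mathrm{d}\mu_G|$ by $\sup_\sigma\|\sigma(f)\|$ over $L^{p+}$-representations $\sigma$, and bounding this supremum from above --- so as to contradict the growth of $\varphi_\mu$ for a suitable bi-$K$-invariant $f$ --- is precisely where the Kunze--Stein phenomenon enters (this is \cite[Theorem 5.3]{SW2}, and it is the engine behind \cite[Proposition 5.4]{deLaat_2019}). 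Your integral computation characterizes (b), not (a); likewise, the Harish--Chandra asymptotics and the statement that all $K$-finite matrix coefficients share the decay of $\varphi_\lambda$ --- the input you propose to import from \cite{deLaat_2019} --- concern only (b). So the hard direction is neither proved nor correctly attributed. The same issue recurs, in its $p=2$ instance, in your unproved assertion that $\Delta(\mathcal{H}_r(K\backslash G/K))=\{\varphi_\lambda\mid\lambda\in i\mathbb{R}\}$: that a class-one representation weakly contained in $\lambda_G$ must have $\lambda\in i\mathbb{R}$ requires, e.g., the Cowling--Haagerup--Howe characterization of temperedness, not merely the integrability properties of $\varphi_\lambda$. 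If instead you import \cite[Proposition 5.4]{deLaat_2019} in full --- which is what the paper does --- then the proposition follows by the translation in your first paragraph alone, and your integral estimate becomes redundant; as written, your proof reconstructs the easy half of that result and assumes the hard half.
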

Now we are ready to prove Theorem \ref{thmA}.
\begin{proof}[Proof of Theorem \ref{thmA}]
Without loss of generality, we can assume $C_\mu^*(G)' \subseteq B(G)$ to be 
a proper ideal in $B(G)$. 
By Proposition \ref{prop:iso_Ktheory} the canonical quotient map $s\vert:\mathcal{H}_\mu(K\backslash G/K) \to \mathcal{H}_r(K\backslash G/K)$ induces
an isomorphism in K-theory. By Proposition \ref{prop:parameter_space} and \ref{prop:spec_of_hecke} there is a largest $\lambda_0 \in (0,\rho)$ such that
$\varphi_{\lambda_0} \in \Delta(\mathcal{H}_\mu(K\backslash G/K))$ but
$\varphi_{\lambda}\not\in \Delta(\mathcal{H}_\mu(K\backslash G/K))$ for all $\lambda \in (\lambda_0 , \rho	]$.
If $\Delta(\ker s\vert) \neq \lbrace \varphi_\lambda \mid \lambda \in (0,\lambda_0]\rbrace$
then $\Delta(\ker s\vert)$ contains an open compact subset, and therefore $K_0(\ker s\vert) \neq 0$, which
is a contradiction. So $\Delta(\ker s\vert) = \lbrace \varphi_\lambda \mid \lambda \in (0,\lambda_0]\rbrace$, 
which implies that $\mathcal{H}_{\mu}(K\backslash G/K) = \mathcal{H}_{L^{p+}}(K\backslash G/K)$ for some
$p\in [2,\infty)$. It is easily seen that this implies that $C^*_{\mu}(G) = C^*_{L^{p+}}(G)$.
\end{proof}
\bibliography{lit}

\providecommand{\bysame}{\leavevmode\hbox to3em{\hrulefill}\thinspace}
\providecommand{\MR}{\relax\ifhmode\unskip\space\fi MR }
\providecommand{\MRhref}[2]{%
  \href{http://www.ams.org/mathscinet-getitem?mr=#1}{#2}
}
\providecommand{\href}[2]{#2}
\begin{thebibliography}{BEW18}

\bibitem[Ama]{amann}
O.~É. Amann, \emph{Groups of three-automorphisms and their unitary
  representations}, PhD thesis, Eidgen\"ossische Technische Hochschule Z\"urich, 2003.

\bibitem[BEW17]{bew0}
A.~Buss, S.~Echterhoff, and R.~Willett, \emph{Exotic crossed products}, Operator algebras and applications—the Abel Symposium 2015, 67–114, Springer, 2017. 

\bibitem[BEW18]{bew1}
A.~Buss, S.~Echterhoff, and R.~Willett, \emph{{Exotic crossed products and the
  Baum--Connes conjecture}}, J. reine angew. Math. (Crelles Journal)
  \textbf{740} (2018), 111--159.

\bibitem[BG13]{brownGuentner}
N.P. Brown and E.P. Guentner, \emph{New {C*}-completions of discrete groups and
  related spaces}, Bull. Lond. Math. Soc. \textbf{45} (2013), 1181--1193.

\bibitem[Cun82]{cuntz_1982}
J.~Cuntz, \emph{The {$K$}-groups for free products of {C*}-algebras}, Operator
  algebras and applications, {P}art {I} ({K}ingston, {O}nt., 1980), Proc.
  Sympos. Pure Math., vol.~38, Amer. Math. Soc., Providence, R.I., 1982,
  pp.~81--84.

\bibitem[Dab19]{dabeler}
A.~Dabeler, \emph{Exotic group {C*}-algebras of Lie groups}, Master's thesis,
  Westf\"alische Wilhelms-Universit\"at M\"unster, 2019.

\bibitem[FTN91]{figaharmonic}
A.~Figa-Talamanca and C.~Nebbia, \emph{Harmonic analysis and representation
  theory for groups acting on homogenous trees}, Lecture note series / London
  mathematical society, Cambridge University Press, 1991.

\bibitem[JV84]{julgValette}
P.~Julg and A.~Valette, \emph{K-theoretic amenability for
  $\mathrm{SL}_2(\mathbb{Q}_p)$, and the action on the associated tree}, J.
  Func. Anal. \textbf{58} (1984), 194--215.

\bibitem[Kas88]{kasparov_1995}
G.G. Kasparov, \emph{Equivariant $KK$-theory and the Novikov conjecture}, 
Invent. Math. \textbf{91} (1988), 147–-201.

\bibitem[KLQ13]{KLQ-Exotic}
S.~Kaliszewski, M.~Landstad, and J.~Quigg, \emph{{Exotic group {C*}-algebras in
  noncommutative duality}}, New York J. Math. \textbf{19} (2013), 689--711.

\bibitem[Kna86]{knapp}
A.~Knapp, \emph{Representation Theory of Semisimple Groups}, Princeton
  University Press, 1986.

\bibitem[Kos69]{kostant1969}
B.~Kostant, \emph{On the existence and irreducibility of certain series of
  representations}, Bull. Amer. Math. Soc. \textbf{75} (1969), 627--642.

\bibitem[dLS19]{deLaat_2019}
T.~de~Laat and T.~Siebenand, \emph{Exotic group {C*}-algebras of simple Lie
  groups with real rank one}, preprint (2019), arXiv:1912.02128.

\bibitem[Ol'77]{olshanskii}
G.~I. Ol'\v{s}hianski\u{\i}, \emph{Classification of the irreducible
  representations of the automorphism groups of {B}ruhat-{T}its trees},
  Funkcional. Anal. i Prilo\v{z}en. \textbf{11} (1977), 32--42, 96.

\bibitem[RW16]{wiersmaRuan}
Z.-J. Ruan and M.~Wiersma, \emph{On exotic group {$C^*$}-algebras}, J. Func.l
  Anal. \textbf{271} (2016), 437--453.

\bibitem[SW18]{SW2}
E.~Samei and M.~Wiersma, \emph{Exotic {C*}-algebras of geometric groups}, preprint (2018),
arXiv:1809:07007.

\bibitem[Tu99]{tu99}
J.-L. Tu, \emph{La conjecture de {B}aum-{C}onnes pour les feuilletages
  moyennables}, $K$-Theory \textbf{17} (1999), 215--264.

\bibitem[vD09]{van2009introduction}
G.~van Dijk, \emph{Introduction to {H}armonic {A}nalysis and {G}eneralized
  {G}elfand {P}airs}, De Gruyter Studies in Mathematics, De Gruyter, 2009.

\bibitem[Wie15]{wiersma}
M.~Wiersma, \emph{{$L^p$}-{F}ourier and {F}ourier-{S}tieltjes algebras for
  locally compact groups}, J. Func. Anal. \textbf{269} (2015), 3928--3951.

\end{thebibliography}
\bibliographystyle{amsalpha}

\end{document}